	\numberwithin{equation}{section} 
	\theoremstyle{plain} 
	\newtheorem{theorem}{Theorem}[section]
	\newtheorem{lemma}[theorem]{Lemma}
	\newtheorem{remark}[theorem]{Remark}
	\newtheorem{proposition}[theorem]{Proposition}
	\newtheorem{fact}[theorem]{Fact}
	\newtheorem{definition}[theorem]{Definition}
	\newcommand{\thechapterwords}
	{ \ifcase \thechapter\or 1\or 2\or 3\or 4\or 5\or
		6\or 7\or 8\or 9\or 10\or 11\fi}
	\def\thickhrulefill{\leavevmode \leaders \hrule height 2ex \hfill \kern \z@}
	\def\@makechapterhead#1{%
		\vspace*{15\p@}%
		{\parindent \z@ \centering \reset@font
			\thickhrulefill\quad
			\scshape  {\chapnumfont \@chapapp{}}{\chapnumfont \thechapterwords}
			\quad \thickhrulefill
			\par\nobreak
			\vspace*{15\p@}%
			\interlinepenalty\@M
			\hrule
			\vspace*{15\p@}%
			\huge {\bfseries  #1}\par\nobreak
			\par
			\vspace*{15\p@}%
			\hrule
			\vskip 15\p@
	}}
	\def\@makeschapterhead#1{%
		\vspace*{15\p@}%
		{\parindent \z@ \centering \reset@font
			\thickhrulefill
			\par\nobreak
			\vspace*{15\p@}%
			\interlinepenalty\@M
			\hrule
			\vspace*{15\p@}%
			\Huge \bfseries #1\par\nobreak
			\par
			\vspace*{15\p@}%
			\hrule
			\vskip 30\p@
	}}
	\DeclareFixedFont{\chapnumfont}{T1}{phv}{b}{n}{20pt}
	\DeclareFixedFont{\chapchapfont}{T1}{phv}{b}{n}{16pt}
	\DeclareFixedFont{\chaptitfont}{T1}{phv}{b}{n}{24.88pt}
	\def\@makechapterhead#1{%
		\vspace*{15\p@}%
		{\parindent \z@ \centering \reset@font
			\thickhrulefill\quad
			\scshape {\chaptitfont\color[rgb]{0.00,0.50,1.00}\@chapapp{}}
			{\chapnumfont \thechapterwords}
			\quad \thickhrulefill
			\par\nobreak
			\vspace*{15\p@}%
			\interlinepenalty\@M
			\hrule
			\vspace*{15\p@}%
			{\Large\bfseries #1}\par\nobreak
			\par
			\vspace*{15\p@}%
			\hrule
			\vskip 30\p@
	}}%
		\title{$\theta$-almost twisted Poisson cohomology}
\author[1]{Nasser Saipele Nansidi}
\author[2,3]{Bertuel Tangue Ndawa}
\author[1]{Joseph Dongho}
\affil[1]{Faculty of Science, University of Maroua, Cameroon.}
\affil[2]{University Institute of Technology, University of Ngaoundere, Cameroon.}
\affil[3]{Institut des Hautes Études Scientifiques, Université Paris-Saclay, France.}
	\date{}
\begin{document}
	\maketitle
	\selectlanguage{english}

	
	%
	%
	%
	%
	

	\paragraph{Abstract:}
	We introduce the notion of a $\theta$-almost twisted Poisson structure on manifolds, which involves incorporating a closed $1$-form $\theta$ into twisted Poisson structures under specific conditions. We provide a characterization of this structure on low-dimensional manifolds and construct the Lie-Rinehart algebra on the module of $1$-forms on manifolds equipped with this structure. This construction leads to a cochain complex and its associated cohomology, which we refer to as $\theta$-almost twisted Poisson cohomology. An example illustrating this cohomology is also presented on $\mathbb{R}^5$.\vspace{0.5cm}

	\textbf{Key words}: $\theta$-almost twisted Poisson structure, $\theta$-almost twisted Poisson cohomology.
	
	\section{Introduction}
	
	Poisson structures, first introduced by Sim\'{e}on Denis Poisson in the early 19th century, form a cornerstone of symplectic geometry and Hamiltonian mechanics. They are defined by a bivector field $\pi$ on a differentiable manifold $M$ satisfying $[\pi,\pi]=0$, where $[\ ,\ ]$ denotes the
	Shouten-Nijenhuis bracket, see \cite{R4}. These structures encode symmetries and conservation laws in physical systems, and their quantization has profoundly influenced deformation theory and noncommutative geometry, see \cite{R10,R7}.

	During the  21st century, a significant generalization emerged under the name of twisted Poisson structures. Introduced by P. \~Severa et al \  \cite{R11},
	these structures relax the Jacobi identity  $[\pi,\pi]=0$  by twisting it with a closed 3-form $\varphi$ verifying
	$\dfrac{1}{2}[\pi,\pi]=\pi^{\#}  (\varphi)$  where  $\pi^{\#}$  denotes the anchor map associated with $\pi$, see Definition  \ref{expPi}.
	This geometric torsion is inspired by the works of C. Klim\v{c}\'ik et al. \cite{R14}, J.S. Park \cite{R12}, and
	L. Cornalba et al. \cite{R13} on deformation quantization and string theory, in which such a $3$-form $\varphi$ plays an important role,
	particularly in compactifications with non-trivial fluxes or non-associative geometries.
	
	Note that it can arise in string theories, more particularly in heterotic string theories, that the $3$-form $\varphi$ loses its closure. This is the case where it acquires a $\alpha'$ correction \cite{R17}. Such cases can also arise in the reduction of Courant algebroids \cite{R18}.
	In 2020, A. Chatzistavrakidis  studied aspects of two-dimensional nonlinear sigma models with a Wess-Zumino term corresponding to
	a nonclosed $3$-form $\varphi$ which may arise upon dimensional
	reduction in the target space, see \cite{R15}. Such a $3$-form $\varphi$ is of the form $d\varphi = -G \wedge F$, where $F$ is a $2$-form and $G$ is a closed $2$-form.
	
	Motivated by the works of P. \v{S}evera et al. \cite{R11} and A. Chatzistavrakidis \cite{R15}, we extend the classical framework of twisted Poisson structures on a manifold $M$ by introducing a closed $1$-form $\theta$ coupled to the $3$-form $\varphi$ via the conditions $d\varphi = \varphi \wedge \theta$ and $\pi^{\#}(\varphi) = 0$, and the quadruplet $(M,\pi,\varphi,\theta)$ is called
	a $\theta$-almost twisted Poisson manifold, see Definition~\ref{def1}. This modification preserves the fundamental equation $\frac{1}{2}[\pi,\pi] = \pi^{\#}(\varphi)$ while allowing
	$\varphi$ to no longer necessarily be closed.
	This approach aligns with work on twisted Jacobi structures \cite{R6}
	and locally conformally cosymplectic Hamiltonian dynamics and Hamilton-Jacobi theory \cite{R20}. On the physical front, such structures are relevant for modeling generalized geometric fluxes, particularly in string theories and quantum gravity. 
	Despite the equivalence in dimensions less than $5$, the $\theta$-almost twisted Poisson structures are a generalization of twisted Poisson
	structures. 
	
	For a $\theta$-almost twisted Poisson manifold $(M, \pi, \varphi, \theta)$, the bracket $\{f, g\} = \pi(df, dg)$, $f, g \in C^\infty(M)$, does not satisfy the Jacobi identity. Its associated Jacobi operator acquires an extra term involving $\varphi$, so conventional methods for cohomology cannot be applied. Moreover, when $\varphi$ is not closed, the pair $(C^\infty(M, \mathbb{R}), \{\cdot, \cdot\})$ is neither a Lie algebra nor a $\varphi$-twisted Lie algebra (see \cite{R2}). There is a large class of such manifolds for which $0\neq d\varphi = \theta \wedge \varphi$ ($\varphi$ is not necessarily closed). For this class, the mechanism for constructing the Lie-Rinehart algebra (Lie algebroid) in \cite{R2} is not applicable. For this reason, we introduce a new bracket in order to describe a cohomology for general $\theta$-almost twisted Poisson manifolds.
	
	%
	Before we can explain our results more precisely, we need to fix some notations, present some basic definitions, and recall some known results that will be useful in the sequel.
	\section{Tools}
	Throughout this paper, $k \geq 1$, all objects are smooth, and $M$ denotes a manifold. For every $f\in C^\infty(M)$ a real smooth function on $M$, if $f$ is nowhere vanishing, $f^{-1}$ stands for $1/f$.  For $p \geq 0$, $\Omega^p(M)$ and $\mathfrak{X}^p(M)$ denote the set of $p$-forms and $p$-vector fields on $M$, respectively. In particular, $\Omega^0(M) = \mathfrak{X}^0(M) = C^\infty(M)$. We denote $\Omega(M) = \bigoplus_{p \geq 0} \Omega^p(M)$ and $\mathfrak{X}(M) = \bigoplus_{p \geq 0} \mathfrak{X}^p(M)$, where $\bigoplus$ denotes the direct sum.
	
	Let $X$ be a vector field on $M$. The interior derivative $i_X$ by $X$ is the unique graded $C^\infty(M)$-linear endomorphism of $\Omega(M)$, of degree $-1$, such that for every $\varphi \in \Omega^k(M)$, $i_X\varphi$ is the element of $\Omega^{k-1}(M)$ defined by
	\begin{align*}
		i_X\varphi(X_1,\dots,X_{k-1})&=\varphi(X,X_1,\dots,X_{k-1}), \; X_1,\dots,X_{k-1}\in \Omega^1(M).
	\end{align*}
	A bivector  $\pi\in \mathfrak{X}^{2}(M)$ induces the morphism 
	{$\pi^{\#}: \Omega^1(M)\longrightarrow \mathfrak{X}^1(M),\ \alpha \longmapsto  \pi^{\#}(\alpha) $}  where
	\begin{align}\label{expPi}
		\pi^{\#}(\alpha)(\beta)&=\pi(\alpha,\beta),\ \ \beta\in \Omega(M).
	\end{align}
	The map $\pi^{\#}$, which can be extended from $\Omega^k(M)$ to $\mathfrak{X}^k(M)$, can be defined by
	\begin{align}
		\pi^{\#}(\varphi)(\alpha_1,\dots,\alpha_k) &= (-1)^{k}\varphi\bigg(\pi^{\#}(\alpha_1),\dots,\pi^{\#}(\alpha_k)\bigg),
	\end{align}
	for every $\varphi\in\Omega^k(M)$ and $\alpha_1,\dots,\alpha_k\in \Omega^1(M)$.
	In particular, $\pi^{\#}(f) = f$ for $f \in C^\infty(M)$.
	
	For every differential form $\varphi$ on $M$, $\varphi^{\#}$ stands for $\pi^{\#}(\varphi)$.
	
	\begin{theorem}\cite[Theorem 2.8]{R9}\label{ui}
		There exists a  unique  $\mathbb{R}$-bilinear operation  $[\ ,\ ]: \mathfrak{X}^{p+1}(M)\times \mathfrak{X}^{q+1}(M)\longrightarrow \mathfrak{X}^{p+q+1}(M),  (p,q\geq -1)$ \\
		which satisfies  the following properties:
		\begin{itemize}
			\item [(i)] When $p=q=0$, it is the usual Lie bracket of vector fields;
			\item [(ii)] When $p=0$ and $q=-1$, it is the Lie derivative $[X,f]=\mathcal{L}_Xf=X(f)$;\\
			\item[\mbox{}]For $P\in\mathfrak{X}^{p+1}(M)$, $Q\in \mathfrak{X}^{q+1}(M)$,  and $R\in \mathfrak{X}^{r+1}(M)$,
			\item [(iii)] Graded skew-symmetry: \ \  {$[P,Q]=-(-1)^{pq}[Q,P]$;}
			
			\item[(iv)] Graded Leibniz identity:\ \   $[P,Q\wedge R]=[P,R]\wedge R+(-1)^{p(q+1)}Q\wedge[P,R];$
			\item[(v)]  Graded Jacobi identity: the Jacobi operator (Jacobiator) $J_{[,]}(P,Q,R)$ of $P,Q,R$ is identically null, where
			$$
			J_{[,]}(P,Q,R) = (-1)^{rp}[P,[Q,R]] + (-1)^{pq}[Q,[R,P]] + (-1)^{qr}[R,[P,Q]].
			$$
		\end{itemize}
	\end{theorem}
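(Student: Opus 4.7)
The plan is to establish uniqueness first and then construct the bracket explicitly, with the Jacobi identity (v) verified last by induction on total degree.

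For \emph{uniqueness}, I would show that any multivector $P\in\mathfrak{X}^{p+1}(M)$ can locally be written as a finite $C^\infty(M)$-linear combination of wedge products $X_0\wedge X_1\wedge\cdots\wedge X_p$ of vector fields. Using the graded Leibniz identity (iv) repeatedly, the bracket $[P,Q]$ of two decomposable multivectors $P=X_0\wedge\cdots\wedge X_p$ and $Q=Y_0\wedge\cdots\wedge Y_q$ reduces to a signed sum of terms of the form
\[
(-1)^{\ast}[X_i,Y_j]\wedge X_0\wedge\cdots\widehat{X_i}\cdots\wedge X_p\wedge Y_0\wedge\cdots\widehat{Y_j}\cdots\wedge Y_q,
\]
and, by induction on $p+q$, the mixed case where one argument is a function reduces via (iv) and (iii) to an expression involving only the Lie bracket (i) and the Lie derivative (ii). Since both brackets agree on these generators and are both $\mathbb{R}$-bilinear graded Leibniz, they must coincide globally by a partition-of-unity argument.

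For \emph{existence}, I would \emph{define} the bracket on decomposable multivectors by the explicit formula just derived, and on sums of such elements by bilinearity. The first technical point is to show this definition is independent of the decomposition; this is handled by checking the defining formula is compatible with the relations $fX\wedge Y = X\wedge(fY)$ using property (ii). Graded skew-symmetry (iii) and the Leibniz identity (iv) follow by direct inspection of the formula and the corresponding properties of the Lie bracket.

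The \emph{main obstacle} is verifying the graded Jacobi identity (v). I would prove that the Jacobiator $J_{[,]}$ is a graded triderivation in each argument, that is, it satisfies the Leibniz identity in each slot; this follows from (iv) and a careful bookkeeping of signs. Consequently $J_{[,]}(P,Q,R)$ is determined by its values when $P$, $Q$, $R$ range over a generating set, namely vector fields and functions. On these generators (v) reduces to a finite list of cases: the classical Jacobi identity for the Lie bracket of vector fields, the identity $[X,[Y,f]]-[Y,[X,f]]=[[X,Y],f]$ which is exactly the defining property of the Lie derivative, and the trivial cases involving two or three functions. Once (v) holds on generators, the triderivation property propagates it to all of $\mathfrak{X}(M)$, completing the construction.
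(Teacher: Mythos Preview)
Your proof plan is sound and follows the standard route to the Schouten--Nijenhuis bracket: reduce uniqueness to decomposable multivectors via the Leibniz rule, define the bracket on decomposables by the explicit signed-sum formula, and check Jacobi by showing the Jacobiator is a triderivation so that it suffices to verify it on the generating set of vector fields and functions. The only point worth tightening is the well-definedness step: you should also check invariance under permutations of the factors $X_0,\ldots,X_p$ (not just the relation $fX\wedge Y=X\wedge fY$), since decomposable multivectors are equivalence classes under the full alternating relations.

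However, there is nothing to compare against in the paper itself: this theorem is not proved there. It is quoted verbatim from \cite[Theorem~2.8]{R9} (Crainic--Fernandes--M\u{a}rcu\c{t}) and used purely as background to define the Schouten--Nijenhuis bracket before stating the twisted Poisson condition $\tfrac12[\pi,\pi]=\pi^{\#}(\varphi)$. The paper's own contributions begin with the bracket $[\,\cdot\,,\,\cdot\,]_{\varphi,\theta}$ on $\Omega^1(M)$ in Section~3, so your effort here goes beyond what the authors supply. If you want to match the cited source, the argument in \cite{R9} is essentially the one you outline.
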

	
	\begin{definition} The bracket $[\ ,\ ]: \mathfrak{X}^{p+1}(M)\times \mathfrak{X}^{q+1}(M)\longrightarrow \mathfrak{X}^{p+q+1}(M),  (p,q\geq -1)$ defined in Theorem \ref{ui}  is called  Schouten-Nijenhuis bracket.
	\end{definition}
	
	\begin{definition}\label{expi2}\cite[page 2-3]{R2}
		A twisted Poisson manifold is a manifold $M$ endowed with a bivector field $\pi$ and a closed 3-form $\varphi$ on $M$ such that
		\begin{align*}
			\dfrac{1}{2}[\pi,\pi] &=\pi^{\#}  (\varphi).
		\end{align*}
	\end{definition}
	\begin{proposition}\cite[Proposition 2.1]{R20}\label{prop1}
		Let $(M,\pi,\varphi)$ be a twisted Poisson manifold. For every functions $f,g \in C^\infty(M)$, one has
		\begin{align*}
			\left[(df)^{\#},(dg)^{\#}\right]-\big(d\{f,g\}\big)^{\#} &= (i_{(dg)^{\#}}i_{(df)^{\#}}\varphi)^{\#}.
		\end{align*}
	\end{proposition}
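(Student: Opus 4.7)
The plan is to evaluate both sides of the claimed identity on an arbitrary smooth function $h \in C^\infty(M)$ and reduce the statement to the defining condition of a twisted Poisson manifold. Writing $X_f := (df)^{\#}$ so that $X_f(h) = \pi(df,dh) = \{f,h\}$, a direct expansion of the Lie bracket of vector fields yields
\[
\bigl([X_f,X_g] - (d\{f,g\})^{\#}\bigr)(h) \;=\; \{f,\{g,h\}\} - \{g,\{f,h\}\} - \{\{f,g\},h\} \;=:\; J(f,g,h),
\]
which is nothing but the Jacobiator of the bracket $\{\cdot,\cdot\}$ applied to $(f,g,h)$.

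The next step is to invoke the classical bivector identity $J(f,g,h) = \tfrac{1}{2}[\pi,\pi](df,dg,dh)$, obtainable either from a coordinate computation with $\pi = \pi^{ij}\partial_i \wedge \partial_j$ in which the second-derivative terms cancel by the skew-symmetry of $\pi^{ij}$, or by iterating the graded Leibniz and Jacobi identities of Theorem \ref{ui} on the nested brackets $[[\pi,f],g]$. Substituting the twisted Poisson condition $\tfrac{1}{2}[\pi,\pi] = \pi^{\#}(\varphi)$ from Definition \ref{expi2} and applying the paper's sign convention for $\pi^{\#}$ on $k$-forms then gives
\[
\bigl([X_f,X_g] - (d\{f,g\})^{\#}\bigr)(h) \;=\; \pi^{\#}(\varphi)(df,dg,dh) \;=\; -\varphi(X_f,X_g,X_h).
\]

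For the right-hand side, setting $\omega := i_{X_g} i_{X_f}\varphi$, the very definition of the interior product gives $\omega(Y) = \varphi(X_f,X_g,Y)$ for every vector field $Y$. Using the relation $\pi(\alpha,\beta) = \beta(\pi^{\#}\alpha)$ together with the skew-symmetry of $\pi$, a short direct computation produces
\[
\omega^{\#}(h) \;=\; \pi(\omega,dh) \;=\; -\omega(X_h) \;=\; -\varphi(X_f,X_g,X_h),
\]
which matches the left-hand side exactly. Since $h \in C^\infty(M)$ is arbitrary and a vector field is determined by its action on smooth functions, the two vector fields coincide, proving the proposition.

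The main obstacle I anticipate is not conceptual but purely book-keeping: carefully reconciling the paper's sign convention $\pi^{\#}(\varphi)(\alpha_1,\dots,\alpha_k) = (-1)^k\varphi(\pi^{\#}\alpha_1,\dots,\pi^{\#}\alpha_k)$ with the classical Jacobiator-Schouten relation, so that both sides emerge with a common $-\varphi(X_f,X_g,X_h)$. Once the signs are aligned, no input beyond the definitions, the Schouten calculus of Theorem \ref{ui}, and the twisted Poisson axiom is required.
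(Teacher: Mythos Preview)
The paper does not supply its own proof of this proposition: it is quoted verbatim from \cite[Proposition~2.1]{R20} and used as a black box thereafter, so there is no in-paper argument to compare against.

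Your argument is correct and is the standard one. The only places where care is needed are exactly the sign checks you flag, and they work out in the paper's conventions: the paper itself records the Jacobiator identity $\{f,\{g,h\}\}+\{g,\{h,f\}\}+\{h,\{f,g\}\}=\pi^{\#}(\varphi)(df,dg,dh)$ in \eqref{Jac}, and together with the convention $\pi^{\#}(\varphi)(\alpha_1,\alpha_2,\alpha_3)=(-1)^3\varphi(\alpha_1^{\#},\alpha_2^{\#},\alpha_3^{\#})$ this gives $-\varphi(X_f,X_g,X_h)$ for the left-hand side, matching your computation of the right-hand side. Since both sides are vector fields and agree on all test functions $h$, the identity follows.
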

	
	Let $R$ be a commutative ring. Let $A$ be an algebra over $R$.
	We recall that a derivation of $A$ is a morphism $\delta: A\longmapsto A$ of $R$-modules so that $\delta(ab)=\delta(a)b+a\delta(b)$
	for all $a,b\in A$. We denote by $Der(A)$  the set of derivations of $A$.  It is well known that  $Der(A)$ is a  Lie algebra over $R$
	under  the commutator of derivations  $[\delta_1,\delta_2]=\delta_1\circ \delta_2-\delta_2\circ \delta_1$.
	The $R$-module  $Der(A)$ is well known to be  a  $A$-module if  the algebra $A$ is commutative.
	\begin{definition}\cite[page 5]{R7}
		Let $R$ be a commutative ring, and let $A$ be a commutative $R$-algebra (not necessarily with unit). Let $(L, [-,-])$ be a Lie algebra over $R$. 
		A Lie-Rinehart algebra structure on $L$ is a Lie algebra homomorphism $\rho: L \longrightarrow Der(A)$ satisfying the following compatibility properties:
		
		\begin{enumerate}
			\item $\rho(a\alpha)(b) = a\rho(\alpha)(b)$,
			\item $[\alpha, a\beta] = a[\alpha, \beta] + \rho(\alpha)(a)\beta$,
		\end{enumerate}
		where $a, b \in A$ and $\alpha, \beta \in L$.
		
		A Lie-Rinehart algebra is a pair $(L, \rho)$ where $\rho$ is a Lie-Rinehart structure on $L$.
	\end{definition}
	The Koszul bracket is a fundamental concept in Poisson geometry, associated with a bivector field $\pi$ on a manifold $M$. It is defined as
	\begin{equation}
		[\alpha, \beta]_K = \mathcal{L}_{\alpha^{\#}}(\beta) - \mathcal{L}_{\beta^{\#}}(\alpha) - d\pi(\alpha, \beta) \quad \text{for every } \alpha, \beta \in \Omega^1(M).\label{t2}
	\end{equation}
	For more details, see \cite{R8}.

	\section{Result}
	We begin this section with a more precise formalization of our space.
	\subsection{$\theta$-almost twisted Poisson manifold}
	\begin{definition}\label{def1}
		An $\theta$-almost twisted Poisson structure on $M$ is a bivector field $\pi$ together with a 3-form $\varphi$ and a closed 1-form $\theta$ on $M$ such that the following hold: $d\varphi = \theta \wedge \varphi$, $\pi^{\#}(\theta) = 0$, and
		\begin{align}\label{Eq2}
			\frac{1}{2}[\pi, \pi] &= \pi^{\#}(\varphi).
		\end{align}
		A manifold $M$ equipped with such a structure $(\pi, \varphi, \theta)$ is called a $\theta$-almost twisted Poisson manifold.
	\end{definition}
	The following remark allows us to give various classes of $\theta$-almost twisted Poisson manifolds, among which are those induced by twisted Poisson manifolds $(M, \pi, \varphi)$ with both closed and non-closed $\varphi$.
	\begin{remark}~ 
		\begin{enumerate}
			\item Every Poisson manifold $(M, \pi)$ can be endowed with a twisted Poisson structure $(\pi, \varphi)$, and a twisted Poisson manifold $(M, \pi, \varphi)$ is a $0$-almost twisted Poisson manifold.
			\item
			Let $(M, \pi_0, \varphi_0)$ be a twisted Poisson manifold, and let $f \in C^\infty(M, \mathbb{R})$ be a nowhere vanishing function such that $\pi_0^{\#}(df) = 0$. For $\pi = f\pi_0$, $\theta = -f^{-1} df$, and $\varphi = f^{-1} \varphi_0$, the quadruplet $(M, \pi, \varphi, \theta)$ is a $\theta$-almost twisted Poisson manifold. A similar example can be constructed on $M \times \mathbb{R}$ with: $\pi = e^t \pi_0$, $\varphi = e^{-t} \varphi_0$, and $\theta = -dt$ where $t$ is the canonical coordinate on $\mathbb{R}$. Moreover, the map $\varphi$ is non-closed on $M \times \mathbb{R}$.
			\item \label{exem1.1.5} Let $(x_1, x_2, x_3, x_4, x_5)$ be a coordinate system in $\mathbb{R}^5$. Let $\theta = dx_5$, $\pi = f \partial_{x_1} \wedge \partial_{x_2} + g \partial_{x_3} \wedge \partial_{x_4}$, and
			\begin{align*}
				\varphi &= \partial_{x_1} g^{-1} dx_1 \wedge dx_3 \wedge dx_4 + \partial_{x_2} g^{-1} dx_2 \wedge dx_3 \wedge dx_4 \\
				&\quad + (\partial_{x_5} g^{-1} - g^{-1}) dx_3 \wedge dx_4 \wedge dx_5 + \partial_{x_3} f^{-1} dx_1 \wedge dx_2 \wedge dx_3 \\
				&\quad + \partial_{x_4} f^{-1} dx_1 \wedge dx_2 \wedge dx_4 + (\partial_{x_5} f^{-1} - f^{-1}) dx_1 \wedge dx_2 \wedge dx_5,
			\end{align*}
			where $f$ and $g$ are nowhere vanishing functions on $\mathbb{R}^5$. Then the triple $(\pi, \varphi, \theta)$ is a $\theta$-almost twisted Poisson structure on $\mathbb{R}^5$, and $\varphi$ is non-closed.
		\end{enumerate}
	\end{remark}

		\begin{fact} Let $m \leq 4$. If $(\pi, \varphi, \theta)$ is a $\theta$-almost twisted Poisson structure on a $m$-dimensional manifold $M$, then $\varphi$ is closed; that is, $(\pi, \varphi)$ is a twisted Poisson structure on $M$. Conversely, a twisted Poisson structure $(\pi, \varphi)$ on $M$ defines a $\theta$-parameter family $\left\{(\pi, \varphi, \theta),\; \pi^{\#}(\theta)=0\right\}$ of $\theta$-almost twisted Poisson structures. In other words, for a fixed $m$-dimensional manifold $M$, there is a one-to-one correspondence between the set of twisted Poisson structures and the set of $\theta$-parameter families of $\theta$-almost twisted Poisson structures.
		\end{fact}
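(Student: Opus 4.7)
The plan is to prove the two directions of the equivalence separately and then deduce the bijection.

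\smallskip
\noindent\emph{Forward direction.} Under the hypothesis $d\varphi=\theta\wedge\varphi$, the closedness of $\varphi$ is equivalent to the vanishing of $\theta\wedge\varphi$. When $m\leq 3$, the form $\theta\wedge\varphi$ has degree $4$ on a manifold of dimension at most $3$, so it vanishes identically and the claim is immediate. The essentially new case is $m=4$, where $\theta\wedge\varphi$ is of top degree. I would argue pointwise, discriminating according to $\mathrm{rank}(\pi_p)\in\{0,2,4\}$. If $\mathrm{rank}(\pi_p)=4$, then $\pi^{\#}$ is an isomorphism on $T^*_pM$, so the assumption $\pi^{\#}(\theta)=0$ forces $\theta_p=0$, whence $(\theta\wedge\varphi)_p=0$. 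In the lower-rank cases, the key observation is that $\theta$ annihilates the image of $\pi^{\#}$, which gives $\pi^{\#}(\theta\wedge\varphi)=0$ as a $4$-vector; combining this with the twisted Poisson equation $\tfrac{1}{2}[\pi,\pi]=\pi^{\#}(\varphi)$ and a local normal form for $\pi$ around $p$ adapted to the splitting $T_pM=\mathrm{Im}(\pi_p^{\#})\oplus V_p$, I would conclude $(\theta\wedge\varphi)_p=0$.

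\smallskip
\noindent\emph{Converse direction.} Given a twisted Poisson structure $(\pi,\varphi)$ on $M$ and any closed $1$-form $\theta$ with $\pi^{\#}(\theta)=0$, the only non-trivial condition of Definition~\ref{def1} left to verify is $d\varphi=\theta\wedge\varphi$. Since $\varphi$ is closed by hypothesis, this reduces to $\theta\wedge\varphi=0$, which is automatic for $m\leq 4$ by the same dimensional argument used in the forward direction.

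\smallskip
\noindent\emph{Bijection and main obstacle.} The assignment sending $(\pi,\varphi)$ to the family $\{(\pi,\varphi,\theta)\,:\,d\theta=0,\ \pi^{\#}(\theta)=0\}$ is well-defined by the converse and injective by construction; surjectivity onto such families follows from the forward direction, which recovers $(\pi,\varphi)$ uniquely from any member of the family. The heart of the argument, and the step I expect to be the main obstacle, is the $m=4$, $\mathrm{rank}(\pi_p)\leq 2$ case: there the simple degree count fails, and one must extract the vanishing of the top-form $\theta\wedge\varphi$ from the combined structural information provided by $\pi^{\#}(\theta)=0$, the twisted Poisson equation, and the local geometry of $\pi$ near $p$.
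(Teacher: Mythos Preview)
The paper offers no proof of this Fact; it is simply asserted. So there is nothing to compare your approach against on the paper's side. What matters is whether your argument actually goes through.

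Your treatment of $m\leq 3$ is fine: $d\varphi$ and $\theta\wedge\varphi$ are $4$-forms on a manifold of dimension at most $3$, hence zero, and both directions follow immediately.

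For $m=4$, however, your argument has a genuine gap that cannot be repaired, because the statement itself is false in that dimension. In the rank-$4$ case your reasoning is correct: $\pi^{\#}$ is an isomorphism, $\pi^{\#}(\theta)=0$ forces $\theta_p=0$, and $(\theta\wedge\varphi)_p=0$. But in the rank $\leq 2$ cases you only obtain $\pi^{\#}(\theta\wedge\varphi)=0$, and since $\pi^{\#}$ is not injective on $4$-forms when $\mathrm{rank}(\pi_p)<4$, this does not yield $(\theta\wedge\varphi)_p=0$. No amount of ``local normal form'' information will close the gap, as the following shows.

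\emph{Counterexample (forward direction, $m=4$).} On $\mathbb{R}^4$ take
\[
\pi=\partial_{x_1}\wedge\partial_{x_2},\qquad \theta=dx_3,\qquad \varphi=e^{x_3}\,dx_1\wedge dx_2\wedge dx_4.
\]
Then $\theta$ is closed, $\pi^{\#}(\theta)=0$, and $d\varphi=e^{x_3}\,dx_3\wedge dx_1\wedge dx_2\wedge dx_4=\theta\wedge\varphi$. Since the image of $\pi^{\#}$ is two-dimensional, $\pi^{\#}(\varphi)=0$; and $[\pi,\pi]=0$ because $\pi$ has constant coefficients. Hence $(\pi,\varphi,\theta)$ satisfies every condition of Definition~\ref{def1}, yet $d\varphi\neq 0$, so $\varphi$ is not closed. (Taking $\pi=0$ gives an even simpler counterexample.)

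\emph{Counterexample (converse direction, $m=4$).} With $\pi=\partial_{x_1}\wedge\partial_{x_2}$ and the closed $3$-form $\varphi=dx_1\wedge dx_2\wedge dx_4$, the pair $(\pi,\varphi)$ is twisted Poisson. The closed $1$-form $\theta=dx_3$ satisfies $\pi^{\#}(\theta)=0$, but $\theta\wedge\varphi=-dx_1\wedge dx_2\wedge dx_3\wedge dx_4\neq 0$, so $d\varphi=0\neq\theta\wedge\varphi$ and $(\pi,\varphi,\theta)$ is \emph{not} a $\theta$-almost twisted Poisson structure. Thus your sentence ``which is automatic for $m\leq 4$ by the same dimensional argument'' is incorrect for $m=4$.

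In short: your proof is complete and correct for $m\leq 3$; for $m=4$ the obstacle you flagged is fatal, and the Fact as stated does not hold.
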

		%
		%

		\subsection{$\theta$-almost twisted Poisson  cohomology.}
		The bracket $[\ ,\ ]_{\varphi,\theta}: \Omega^1(M) \times \Omega^1(M) \longrightarrow \Omega^1(M)$ associated with a $\theta$-almost twisted Poisson manifold $(M, \pi, \varphi, \theta)$ is given by
		\begin{equation}
			[\alpha, \beta]_{\varphi, \theta} = [\alpha, \beta]_K + i_{\pi^{\#}(\beta)} i_{\pi^{\#}(\alpha)} \varphi + \pi(\alpha, \beta) \cdot \theta, \label{expresscrochet}
		\end{equation}
		where $\alpha, \beta \in \Omega^1(M)$. 
		
		Observe that  the associated bracket $[\ ,\ ]_{\varphi,\theta}$ of a $\theta$-almost twisted Poisson manifold is a $\theta$-almost twisted version of the associated bracket $[\ ,\ ]_{\varphi}:\ \Omega^1(M)\times\Omega^1(M)  \longrightarrow \Omega^1(M)$,
		$(\alpha,\beta) \longmapsto [\alpha,\beta]_K+i_{\pi^{\#}(\beta)}i_{\pi^{\#}(\alpha)}\varphi$ of a  twisted Poisson manifold which is also a twisted version of that of Koszul bracket $[\ ,\ ]_K$, see \eqref{t2}.  
		
		For a twisted Poisson manifold $(M,\pi,\varphi)$, the associated bracket $[\ ,\ ]_{\varphi}$ defines a Lie algebra structure on $\Omega^1(M)$. Moreover, the ordered pair $(\Omega^1(M),\pi^{\#})$ is a Lie-Rinehart algebra, which induces a cochain complex whose associated cohomology is called Lichnerowicz-twisted Poisson cohomology, see \cite{R11,R2,R20}.
		The rest of this work is devoted to define $\theta$-almost twisted Poisson cohomology.

		\begin{remark}
			Let $(M,\pi,\varphi,\theta)$  be a $\theta$-almost twisted Poisson manifold.
			The map
			\begin{eqnarray}
				\{\ ,\ \}: C^\infty(M)\times (C^\infty(M)&\longrightarrow& (C^\infty(M),\quad   \{f,g\}\mapsto \pi(df,dg)
			\end{eqnarray} is $\mathbb{R}-$bilinear and skew-symmetric. Moreover, for every $f,g,h \in C^\infty(M)$, we have
			\begin{align}\label{Jac}
				\{f,\{g,h\}\}+\{g,\{h,f\}\}+\{h,\{f,g\}\}&=\pi^{\#}(\varphi)(df,dg,dh).
			\end{align}
		\end{remark}

		\begin{proposition}\label{inc4}
			Let  $\alpha, \beta\in\Omega^1(M)$, and $f,g \in C^\infty(M)$. We have
			\begin{align}\label{inc1}
				[f\alpha,g\beta]_{\varphi,\theta}&=fg[\alpha,\beta]_{\varphi,\theta}+\pi^{\#}(f\alpha)(g)\beta-\pi^{\#}(g\beta)(f)\alpha.
			\end{align}
		\end{proposition}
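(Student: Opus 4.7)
The plan is to split the bracket $[f\alpha,g\beta]_{\varphi,\theta}$ into its three defining summands from \eqref{expresscrochet} and handle each one separately, then recombine the pieces. Throughout I will use the fact that $\pi^\#$ is $C^\infty(M)$-linear on $1$-forms, so $\pi^\#(f\alpha)=f\pi^\#(\alpha)$ and $\pi^\#(g\beta)=g\pi^\#(\beta)$, and the fact that $\pi$ is $C^\infty(M)$-bilinear, so $\pi(f\alpha,g\beta)=fg\,\pi(\alpha,\beta)$.

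First I would treat the Koszul piece $[f\alpha,g\beta]_K$ and establish the Leibniz-type identity
\[
[f\alpha,g\beta]_K = fg\,[\alpha,\beta]_K + f\,\pi^\#(\alpha)(g)\,\beta - g\,\pi^\#(\beta)(f)\,\alpha.
\]
This is the standard Leibniz rule for the Koszul bracket; since it is not recorded in the excerpt, I would either cite it from \cite{R8} or verify it directly by expanding with the Cartan formula $\mathcal{L}_{fX}=f\mathcal{L}_X+df\wedge i_X$, observing that $i_{\alpha^\#}\beta=\pi(\alpha,\beta)$, and noting that the $df$- and $dg$-terms produced by $d(fg\,\pi(\alpha,\beta))$ cancel exactly against the $df\wedge i_{\alpha^\#}\beta$ and $dg\wedge i_{\beta^\#}\alpha$ contributions coming from the two Lie derivatives.

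Next I would handle the twisting terms. For the interior product piece, $C^\infty(M)$-linearity of $i_X$ in $X$ together with $\pi^\#(f\alpha)=f\pi^\#(\alpha)$ and $\pi^\#(g\beta)=g\pi^\#(\beta)$ immediately yields
\[
i_{\pi^\#(g\beta)}\,i_{\pi^\#(f\alpha)}\varphi = fg\,i_{\pi^\#(\beta)}\,i_{\pi^\#(\alpha)}\varphi .
\]
For the $\theta$-piece, the bilinearity of $\pi$ gives $\pi(f\alpha,g\beta)\,\theta = fg\,\pi(\alpha,\beta)\,\theta$.

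Finally I would sum the three contributions and collect the $fg$-coefficient, which reconstructs $fg\,[\alpha,\beta]_{\varphi,\theta}$ by definition \eqref{expresscrochet}. The leftover non-tensorial terms $f\,\pi^\#(\alpha)(g)\,\beta - g\,\pi^\#(\beta)(f)\,\alpha$ are then rewritten as $\pi^\#(f\alpha)(g)\,\beta-\pi^\#(g\beta)(f)\,\alpha$ using $C^\infty$-linearity of $\pi^\#$, yielding the announced identity \eqref{inc1}. The only non-routine step is the Koszul Leibniz identity; once it is in hand, the remaining verifications are purely formal because the twisting terms are $C^\infty(M)$-bilinear and therefore contribute nothing outside the $fg$-factor.
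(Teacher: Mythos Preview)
Your proposal is correct and follows essentially the same route as the paper: both split $[f\alpha,g\beta]_{\varphi,\theta}$ into its three summands, observe that the $\varphi$- and $\theta$-terms are $C^\infty(M)$-bilinear and hence contribute only the factor $fg$, and reduce the work to the Leibniz identity for the Koszul bracket. The paper carries out that last step explicitly via the Cartan formula (passing to the equivalent expression $i_{\pi^\#(\alpha)}d\beta-i_{\pi^\#(\beta)}d\alpha+d\pi(\alpha,\beta)$), which is exactly the direct verification you sketch as an alternative to citing \cite{R8}.
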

		\begin{proof}
			From \eqref{expresscrochet}, we have:  \begin{align*}
				[f\alpha,g\beta]_{\varphi,\theta}&=[f\alpha,g\beta]_K +
				i_{\pi^{\#}(g\beta)}i_{\pi^{\#}(f\alpha)}\varphi +
				\pi(f\alpha,g\beta)\cdot
				\theta\\
				&=\mathcal{L}_{\pi^{\#}(f\alpha)}(g\beta)-\mathcal{L}_{\pi^{\#}(g\beta)}(f\alpha)-d\pi(f\alpha,g\beta)\\
				&\quad+fgi_{\pi^{\#}(\beta)}i_{\pi^{\#}(\alpha)}\varphi +
				fg\pi(\alpha,\beta)\cdot
				\theta\\
				&=i_{\pi^{\#}(f\alpha)}d(g\beta)-i_{\pi^{\#}(g\beta)}d(f\alpha)+d\pi(f\alpha,g\beta)\\
				&\quad+fgi_{\pi^{\#}(\beta)}i_{\pi^{\#}(\alpha)}\varphi +
				fg\pi(\alpha,\beta)\cdot \theta\\
				&=fg\big[i_{\pi^{\#}(\alpha)}d(\beta)-i_{\pi^{\#}(\beta)}d(\alpha)+d\pi(\alpha,\beta)\big]\\
				&\quad+\pi^{\#}(f\alpha)(g)\beta-\pi^{\#}(g\beta)(f)\alpha+
				fgi_{\pi^{\#}(\beta)}i_{\pi^{\#}(\alpha)}\varphi +
				fg\pi(\alpha,\beta)\cdot \theta\\
				&=fg[\alpha,\beta]_{\varphi,\theta}+\pi^{\#}(f\alpha)(g)\beta-\pi^{\#}(g\beta)(f)\alpha.
			\end{align*}
			This completes the proof.
		\end{proof}
		The bracket $[\ ,\ ]_{\varphi,\theta}$ is related to the anchor map $\pi^{\#}$. The precise formalization is as follows: 
		\begin{proposition}\label{inc2}For every $f,g,u ,v,c\in C^\infty(M)$, we  have
			\begin{align*}
				\pi^{\#}\big([udf,vdg]_{\varphi,\theta}\big)&=[\pi^{\#}(udf),\pi^{\#}(vdg)],
			\end{align*}
		\end{proposition}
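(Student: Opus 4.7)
The plan is to reduce the identity to the case of exact $1$-forms $\alpha = df$, $\beta = dg$ and then bootstrap to $u\,df$ and $v\,dg$ by combining Proposition~\ref{inc4} with the usual Leibniz rule for the Lie bracket of vector fields. Since $\pi^{\#}$ is $C^\infty(M)$-linear on $1$-forms, the identity $\pi^{\#}(u\alpha) = u\,\pi^{\#}(\alpha)$ will let me push scalars in and out of the anchor freely.

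For the exact case, I would first compute $[df,dg]_K$ directly from \eqref{t2}. Using $\mathcal{L}_{\pi^{\#}(df)}(dg) = d\{f,g\}$, $\mathcal{L}_{\pi^{\#}(dg)}(df) = -d\{f,g\}$, and $d\pi(df,dg) = d\{f,g\}$, I obtain $[df,dg]_K = d\{f,g\}$. Substituting into \eqref{expresscrochet} yields
\begin{align*}
[df,dg]_{\varphi,\theta} \;=\; d\{f,g\} + i_{\pi^{\#}(dg)} i_{\pi^{\#}(df)} \varphi + \{f,g\}\,\theta.
\end{align*}
Applying $\pi^{\#}$ and invoking the standing hypothesis $\pi^{\#}(\theta)=0$ kills the last summand. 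The remaining two terms are then recombined via Proposition~\ref{prop1}, whose proof only uses the structural identity $\tfrac12[\pi,\pi] = \pi^{\#}(\varphi)$ and therefore remains valid in the $\theta$-almost twisted setting (it never requires $d\varphi=0$). This delivers $\pi^{\#}\bigl([df,dg]_{\varphi,\theta}\bigr) = [\pi^{\#}(df),\pi^{\#}(dg)]$.

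To finish, I apply Proposition~\ref{inc4} to write
\begin{align*}
[u\,df,\,v\,dg]_{\varphi,\theta} = uv\,[df,dg]_{\varphi,\theta} + \pi^{\#}(u\,df)(v)\,dg - \pi^{\#}(v\,dg)(u)\,df,
\end{align*}
and compare with the Leibniz expansion
\begin{align*}
[\pi^{\#}(u\,df),\,\pi^{\#}(v\,dg)] = uv\,[\pi^{\#}(df),\pi^{\#}(dg)] + u\,\pi^{\#}(df)(v)\,\pi^{\#}(dg) - v\,\pi^{\#}(dg)(u)\,\pi^{\#}(df).
\end{align*}
Using $C^\infty(M)$-linearity of $\pi^{\#}$ to identify $u\,\pi^{\#}(df)(v) = \pi^{\#}(u\,df)(v)$ (and similarly for the other cross-term), the two right-hand sides agree term-by-term as soon as the exact-form case is in hand. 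The only substantive step is the reuse of Proposition~\ref{prop1}; everything else is scalar bookkeeping, so the main conceptual obstacle is simply confirming that the earlier twisted-Poisson identity survives the relaxation $d\varphi = \theta\wedge\varphi$.
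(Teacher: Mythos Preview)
Your proof is correct and follows essentially the same route as the paper: both use Proposition~\ref{inc4} to reduce to exact $1$-forms, compute $[df,dg]_{\varphi,\theta}=d\{f,g\}+i_{(dg)^{\#}}i_{(df)^{\#}}\varphi+\{f,g\}\theta$, kill the $\theta$-term via $\pi^{\#}(\theta)=0$, invoke Proposition~\ref{prop1} for the remaining two summands, and then recombine using the Leibniz rule for the Lie bracket of vector fields. Your explicit remark that Proposition~\ref{prop1} only requires $\tfrac12[\pi,\pi]=\pi^{\#}(\varphi)$ (and not $d\varphi=0$) is a welcome clarification that the paper leaves implicit.
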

		\begin{proof}
			Let $f,g,u ,v,c\in C^\infty(M)$. We get
			\begin{align}\label{inc3}
				\nonumber
				\pi^{\#}\big([udf,vdg]_{\varphi,\theta}\big)(c)&=\pi^{\#}\Big(uv[df,dg]_ {\varphi,\theta}+\pi^{\#}(udf)(v)dg-\pi^{\#}(vdg)(u)df\Big)(c)\\ \nonumber
				&=uv\pi^{\#}\Big([df,dg]_
				{\varphi,\theta}\Big)(c)+\pi^{\#}(udf)(v)\pi^{\#}(dg)(c)\\ \nonumber
				&\quad-\pi^{\#}(vdg)(u)\pi^{\#}(df)(c)\\ \nonumber
				&=uv\pi^{\#}\big(d\{df,dg\}+i_{\pi^{\#}(dg)}i_{\pi^{\#}(df)}\varphi\big)(c)\\ \nonumber
				&\quad+\pi^{\#}(udf)(v)\pi^{\#}(dg)(c)-\pi^{\#}(vdg)(u)\pi^{\#}(df)(c)\\ \nonumber
				&=uv[\pi^{\#}(df),\pi^{\#}(dg)](c)+\pi^{\#}(udf)(v)\pi^{\#}(dg)(c)\\
				&\quad-\pi^{\#}(vdg)(u)\pi^{\#}(df)(c)\\  \nonumber
				&=[\pi^{\#}(udf),\pi^{\#}(vdg)](c)\end{align}
			where equality  \eqref{inc3} comes from  Proposition \ref{prop1}.
			The proposition is then proved.
		\end{proof}
		
		In  the following,   we set  $\eta_{f,g,h}=i_{(df)^{\#}}i_{(dg)^{\#}}i_{(dh)^{\#}}\varphi$, and  $\eta_{f,g}=i_{(df)^{\#}}i_{(dg)^{\#}}\varphi$
		for every  $f, g, h \in C^\infty(M)$. The two following results are crucial for defining a Lie-Rinehart structure on $\Omega^1(M)$, as they provide the necessary conditions for the structure.
		\begin{lemma}\label{lemm1} For every real functions $f$, $g$, and $h$ on $M$,
			$$[df,\eta_{h,g}]_{\varphi,\theta}=i_{(df)^{\#}}d\eta_{h,g}+d\eta_{f,h,g}+ i_{[(dg)^{\#},(dh)^{\#}]}i_{(df)^{\#}}\varphi-\eta_{\{g,h\},f}+\eta_{f,h,g}\theta.$$
		\end{lemma}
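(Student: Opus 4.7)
The plan is to unfold the definition \eqref{expresscrochet} of $[df,\eta_{h,g}]_{\varphi,\theta}$ into its three summands---the Koszul bracket $[df,\eta_{h,g}]_K$, the curvature term $i_{\eta_{h,g}^{\#}}i_{(df)^{\#}}\varphi$, and the $\theta$-term $\pi(df,\eta_{h,g})\cdot\theta$---and rewrite each one until the five terms of the target identity appear in order.

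For the Koszul piece, I would apply Cartan's magic formula to each Lie derivative occurring in
$[df,\eta_{h,g}]_K = \mathcal{L}_{(df)^{\#}}\eta_{h,g} - \mathcal{L}_{\eta_{h,g}^{\#}}df - d\pi(df,\eta_{h,g})$. The first gives $i_{(df)^{\#}}d\eta_{h,g}+d\eta_{f,h,g}$ (recognizing $i_{(df)^{\#}}\eta_{h,g}=\eta_{f,h,g}$). The second collapses to $d(\eta_{h,g}^{\#}(f))$ since $d(df)=0$; then the antisymmetry of $\pi$ rewrites $\eta_{h,g}^{\#}(f)=\pi(\eta_{h,g},df)=-\pi(df,\eta_{h,g})=-\eta_{f,h,g}$, so this contributes $+d\eta_{f,h,g}$. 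The last summand is $-d\eta_{f,h,g}$. Two of the three copies of $d\eta_{f,h,g}$ cancel, leaving $[df,\eta_{h,g}]_K=i_{(df)^{\#}}d\eta_{h,g}+d\eta_{f,h,g}$, which is precisely the first two terms of the announced right-hand side.

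For the curvature piece I would invoke Proposition~\ref{prop1} (with the variables $g,h$ in place of $f,g$) to identify the anchor $\eta_{h,g}^{\#}=[(dg)^{\#},(dh)^{\#}]-(d\{g,h\})^{\#}$; substituting this into $i_{\eta_{h,g}^{\#}}i_{(df)^{\#}}\varphi$ immediately yields $i_{[(dg)^{\#},(dh)^{\#}]}i_{(df)^{\#}}\varphi-\eta_{\{g,h\},f}$. The remaining $\theta$-summand is $\eta_{f,h,g}\,\theta$, since $\pi(df,\eta_{h,g})=i_{(df)^{\#}}\eta_{h,g}=\eta_{f,h,g}$. Adding the three simplified pieces reproduces the identity.

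The main obstacle is entirely bookkeeping rather than any deep idea: the sign arising from the antisymmetry of $\pi$ in the evaluation $\eta_{h,g}^{\#}(f)=-\eta_{f,h,g}$ must be tracked carefully, because it is exactly what makes the two stray $d\eta_{f,h,g}$ terms cancel, and the ordering of interior products in the invocation of Proposition~\ref{prop1} must be matched so that $i_{(dh)^{\#}}i_{(dg)^{\#}}\varphi$ (our $\eta_{h,g}$) sits on the correct side of the displayed identity. Once these two sign issues are handled, the rest is a routine reassembly.
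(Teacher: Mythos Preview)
Your proposal is correct and follows essentially the same route as the paper: expand $[df,\eta_{h,g}]_{\varphi,\theta}$ via \eqref{expresscrochet}, apply Cartan's formula to the two Lie derivatives in the Koszul part so that the $\pm d\eta_{f,h,g}$ terms cancel, and then use Proposition~\ref{prop1} with $(g,h)$ in place of $(f,g)$ to rewrite $\eta_{h,g}^{\#}$ in the curvature term. The only cosmetic difference is that the paper records the cancellation as $+d\pi(df,\eta_{h,g})-d\pi(df,\eta_{h,g})$ before identifying $i_{(df)^{\#}}\eta_{h,g}=\eta_{f,h,g}$, whereas you make that identification first; the computations are otherwise identical.
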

		\begin{proof}	
			From \eqref{expresscrochet}, it follows that
			\begin{align*}
				[df,\eta_{h,g}]_{\varphi,\theta} &=[df,\eta_{h,g}]_K+i_{(\eta_{h,g})^{\#}}i_{(df)^{\#}}\varphi +\pi(df,\eta_{h,g})\theta\\
				&=\mathcal{L}_{(df)^{\#}}(\eta_{h,g})-\mathcal{L}_{(\eta_{h,g})^{\#}}(df)-d\pi(df,\eta_{h,g})
				+i_{(\eta_{h,g})^{\#}}i_{(df)^{\#}}\varphi +\eta_{f,h,g}\theta\\
				&=i_{(df)^{\#}}d\eta_{h,g}+di_{(df)^{\#}}\eta_{h,g}-di_{(\eta_{h,g})^{\#}}df-d\pi(df,\eta_{h,g})\\
				&\quad+i_{(\eta_{h,g})^{\#}}i_{(df)^{\#}}\varphi+\eta_{f,h,g}\theta\\
				&=i_{(df)^{\#}}d\eta_{h,g}+di_{(df)^{\#}}\eta_{h,g}+d\pi(df,\eta_{h,g})-d\pi(df,\eta_{h,g})\\
				&\quad+i_{(\eta_{h,g})^{\#}}i_{(df)^{\#}}\varphi+\eta_{f,h,g}\theta\\
				&=i_{(df)^{\#}}d\eta_{h,g}+di_{(df)^{\#}}\eta_{h,g}+i_{(\eta_{h,g})^{\#}}i_{(df)^{\#}}\varphi+\eta_{f,h,g}\theta.
			\end{align*}
			According to  Proposition~\ref{prop1}, we have  $$(\eta_{h,g})^{\#}=[(dg)^{\#},(dh)^{\#}]-\big(d\{g,h\}\big)^{\#}.$$
			Therefore,
			\begin{equation}\label{sr3}
				\begin{array}{l}
					[df,\eta_{h,g}]_{\varphi,\theta}= i_{(df)^{\#}}d\eta_{h,g}+di_{(df)^{\#}}\eta_{h,g}+i_{[(dg)^{\#},(dh)^{\#}]}i_{(df)^{\#}}\varphi
					\\\hspace{2cm}-i_{(d\{h,g\})^{\#}}i_{(df)^{\#}}\varphi+\eta_{f,h,g}\theta
					i_{(df)^{\#}}d\eta_{h,g}+d\eta_{f,h,g}\\\hspace{2cm}+ i_{[(dg)^{\#},(dh)^{\#}]}i_{(df)^{\#}}\varphi-\eta_{\{g,h\},f}+\eta_{f,h,g}\theta.
				\end{array}
			\end{equation}	
			This is precisely the assertion of Lemma~\ref{lemm1}.
			
		\end{proof}\par
		\begin{lemma}\label{lem2} For all  functions $f$, $g$ and $h$   on $M$, we have
			\begin{align*}
				[df,[dg,dh]_{\varphi,\theta}]_{\varphi,\theta} & =d\{f,\{g,h\}\}+\{f,\{g,h\}\}\theta+\eta_{\{g,h\},f}\\
				&\quad+[df,\eta_{h,g}]_{\varphi,\theta}+[df,\{g,h\}\theta]_{\varphi,\theta}.
			\end{align*}	
		\end{lemma}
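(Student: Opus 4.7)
The plan is to expand the inner bracket $[dg,dh]_{\varphi,\theta}$ in closed form, then use $\mathbb{R}$-linearity of $[\,\cdot\,,\,\cdot\,]_{\varphi,\theta}$ in its second argument to distribute $[df,\,\cdot\,]_{\varphi,\theta}$ across the resulting sum, and finally reapply that same expansion to the remaining exact piece.

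First I would compute $[dg,dh]_{\varphi,\theta}$ directly from \eqref{expresscrochet}. For the Koszul part, Cartan's magic formula together with $d(dh)=d(dg)=0$ yields $\mathcal{L}_{(dg)^{\#}}(dh)=d\bigl(i_{(dg)^{\#}}dh\bigr)=d\{g,h\}$ and, symmetrically, $\mathcal{L}_{(dh)^{\#}}(dg)=-d\{g,h\}$, while $d\pi(dg,dh)=d\{g,h\}$. Substituted into \eqref{t2}, the three exact summands collapse to a single $d\{g,h\}$, and combining with the two remaining summands of \eqref{expresscrochet} produces the key identity
\begin{equation*}
[dg,dh]_{\varphi,\theta}=d\{g,h\}+\eta_{h,g}+\{g,h\}\theta.
\end{equation*}

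Second, each of the three constituents defining $[\,\cdot\,,\,\cdot\,]_{\varphi,\theta}$ in \eqref{expresscrochet} is manifestly $\mathbb{R}$-linear in its second argument, since the Koszul bracket, the interior product, and $\pi$ are $\mathbb{R}$-bilinear. Applying this linearity to the identity above with $[df,\,\cdot\,]_{\varphi,\theta}$ gives
\begin{equation*}
[df,[dg,dh]_{\varphi,\theta}]_{\varphi,\theta}=[df,d\{g,h\}]_{\varphi,\theta}+[df,\eta_{h,g}]_{\varphi,\theta}+[df,\{g,h\}\theta]_{\varphi,\theta}.
\end{equation*}

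Third, I reapply the key identity with $(g,h)$ replaced by $(f,\{g,h\})$ to obtain
\begin{equation*}
[df,d\{g,h\}]_{\varphi,\theta}=d\{f,\{g,h\}\}+\eta_{\{g,h\},f}+\{f,\{g,h\}\}\theta,
\end{equation*}
and substitute this into the preceding display. The resulting five summands are exactly those listed on the right-hand side of Lemma \ref{lem2}. The argument is essentially bookkeeping; the only point requiring attention is the $\eta$-subscript convention, since the inner expansion yields $\eta_{h,g}$ while the outer reapplication yields $\eta_{\{g,h\},f}$, and these must appear in the order prescribed by the statement. I do not foresee any deeper technical obstacle.
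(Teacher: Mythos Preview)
Your proposal is correct and follows essentially the same route as the paper's own proof: expand $[dg,dh]_{\varphi,\theta}$ to $d\{g,h\}+\eta_{h,g}+\{g,h\}\theta$, distribute $[df,\cdot]_{\varphi,\theta}$ by $\mathbb{R}$-linearity, and then expand $[df,d\{g,h\}]_{\varphi,\theta}$ in the same way. The only cosmetic difference is that you phrase the last step as ``reapplying the key identity with $(g,h)\mapsto(f,\{g,h\})$,'' whereas the paper re-expands directly from \eqref{expresscrochet}; the computations are identical.
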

		\begin{proof}
			From \eqref{expresscrochet}, we have
			\begin{align*}
				[dg,dh]_{\varphi,\theta} &= [dg,dh]_K+i_{(dh)^{\#}} i_{(dg)^{\#}}\varphi+ \pi(dg,dh) \theta\\
				&= d\{g,h\}+\eta_{h,g}+\{g,h\}\theta.
			\end{align*}
			Then,
			\begin{align*}
				\nonumber 
				[df,[dg,dh]_{\varphi,\theta}]_{\varphi,\theta}
				&= [df,d\{g,h\}]_{\varphi,\theta}+[df,\eta_{h,g}]_{\varphi,\theta} + [df,\{g,h\}\theta]_{\varphi,\theta} \\ \nonumber
				&=[df,d\{g,h\}]_K+i_{(d\{g,h\})^{\#}}i_{(df)^{\#}}\varphi +[df,\eta_{h,g}]_{\varphi,\theta}\\ \nonumber
				&\quad +\pi(df,d\{g,h\})\theta+[df,\{g,h\}\theta]_{\varphi,\theta}\nonumber\\
				&=d\{f,\{g,h\}\}+\eta_{\{g,h\},f}+\{f,\{g,h\}\}\theta \nonumber\\ 
				&\quad+[df,\eta_{h,g}]_{\varphi,\theta}+[df,\{g,h\}\theta]_{\varphi,\theta}.\nonumber
			\end{align*}
		\end{proof}
		Now we have the tools to prove our main result, which states more precisely as follows:
		\begin{theorem}\label{prop3}
			Let $(M,\pi,\varphi,\theta)$ be a $\theta$-almost twisted Poisson manifold. The bracket $[\ ,\ ]_{\varphi,\theta}$ defined in \eqref{expresscrochet} is  a  Lie bracket on $\Omega^1(M)$. Moreover, the morphism  $\pi^{\#} :\Omega^1(M)\longrightarrow \mathfrak{X}^1(M)$ defined  in $\eqref{expPi}$ is  a Lie-Rinehart  structure on $\Omega^1(M)$.
		\end{theorem}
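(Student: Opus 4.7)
The plan is to verify separately that $[\ ,\ ]_{\varphi,\theta}$ is $\mathbb{R}$-bilinear, skew-symmetric, and satisfies the Jacobi identity, and then that the pair $(\pi^{\#},[\ ,\ ]_{\varphi,\theta})$ fulfils the Lie-Rinehart axioms. Bilinearity is immediate from \eqref{expresscrochet}; skew-symmetry follows from that of $[\ ,\ ]_K$, the anticommutativity of interior derivatives in $i_{\pi^{\#}(\alpha)}i_{\pi^{\#}(\beta)}\varphi$, and $\pi(\alpha,\beta)=-\pi(\beta,\alpha)$. Setting $f=1$, $g=a$ in Proposition \ref{inc4} and using $\pi^{\#}(a\beta)(1)=0$ yields the Lie-Rinehart Leibniz identity $[\alpha,a\beta]_{\varphi,\theta}=a[\alpha,\beta]_{\varphi,\theta}+\pi^{\#}(\alpha)(a)\beta$; the other compatibility $\pi^{\#}(a\alpha)(b)=a\pi^{\#}(\alpha)(b)$ is just $C^\infty(M)$-linearity of $\pi^{\#}$. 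Proposition \ref{inc2} establishes the bracket-homomorphism $\pi^{\#}([\alpha,\beta]_{\varphi,\theta})=[\pi^{\#}(\alpha),\pi^{\#}(\beta)]$ on generators $\alpha=u\,df$, $\beta=v\,dg$, and $\mathbb{R}$-bilinearity promotes this to all of $\Omega^1(M)$ since every $1$-form is locally a finite $C^\infty(M)$-linear combination of exact ones.

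The Jacobi identity is the substantive step. I would prove it first on triples of exact $1$-forms. Applying Lemma \ref{lem2} to each summand of
$J(df,dg,dh):=\sum_{\mathrm{cyc}}[df,[dg,dh]_{\varphi,\theta}]_{\varphi,\theta}$
and then expanding the terms $[df,\eta_{h,g}]_{\varphi,\theta}$ by Lemma \ref{lemm1}, one obtains a cyclic sum that I would regroup into four blocks: (i) the scalar Jacobiator $\sum_{\mathrm{cyc}}\{f,\{g,h\}\}$, which is controlled by \eqref{Jac}; (ii) the terms $\eta_{\{g,h\},f}$ and $-\eta_{\{g,h\},f}$ produced respectively by Lemmas \ref{lem2} and \ref{lemm1}, which cancel directly; (iii) the contractions $i_{[(dg)^{\#},(dh)^{\#}]}i_{(df)^{\#}}\varphi$, which combine with the $\sum_{\mathrm{cyc}}\{f,\{g,h\}\}\theta$ contributions through \eqref{Jac} to yield a $\pi^{\#}(\varphi)$-type expression; and (iv) the leftover $d\eta$-terms and $\theta$-twisted terms, which are to be treated using $d\varphi=\theta\wedge\varphi$ to rewrite $d\eta_{h,g}$ and $d\eta_{f,h,g}$ as $\theta$-weighted contractions of $\varphi$, together with $\pi^{\#}(\theta)=0$ to annihilate the remaining $\theta$-contributions, including $[df,\{g,h\}\theta]_{\varphi,\theta}$ via Proposition \ref{inc4} and $d\theta=0$.

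To extend from exact $1$-forms to arbitrary ones I would iterate Proposition \ref{inc4}: for $\alpha=u\,df$, $\beta=v\,dg$, $\gamma=w\,dh$, the nested brackets decompose into a principal term $uvw\,[df,[dg,dh]_{\varphi,\theta}]_{\varphi,\theta}$ plus corrections carrying factors of the form $\pi^{\#}(\cdot)(\cdot)$. Summing cyclically, the principal contributions vanish by the exact case just treated, and the corrections reorganise into a cyclic sum that vanishes formally from the Leibniz identity and the anchor homomorphism property already established. This is the standard observation that the Jacobiator of a Lie-Rinehart candidate is automatically $C^\infty(M)$-multilinear once the Leibniz and anchor axioms hold, so verifying Jacobi on a local spanning set suffices.

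The hard part is block (iv). In the closed twisted Poisson case of \cite{R2,R20} no $\theta$-twist is present and the computation collapses to a classical identity; here the non-closedness of $\varphi$ forces $d\eta$ to produce genuine $\theta\wedge\varphi$ pieces that must be absorbed by the $\theta$-linear term of \eqref{expresscrochet}, and the cancellation rests on the full package of axioms of Definition \ref{def1}. Specifically, $d\varphi=\theta\wedge\varphi$ converts $d\eta$-contributions into $\theta$-contractions, $\pi^{\#}(\theta)=0$ annihilates the surviving $\theta$-weighted terms once anchors are taken, and $\frac12[\pi,\pi]=\pi^{\#}(\varphi)$ is exactly what Proposition \ref{prop1} needs in order to enter the proofs of Lemmas \ref{lemm1}--\ref{lem2}. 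This explains why all three axioms on $(\pi,\varphi,\theta)$ are required, and not merely two of them.
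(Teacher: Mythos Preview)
Your overall strategy matches the paper's: reduce the Jacobi identity to exact $1$-forms via Proposition~\ref{inc4}, expand using Lemmas~\ref{lemm1} and~\ref{lem2}, and close with $d\varphi=\theta\wedge\varphi$ and $\pi^{\#}(\theta)=0$; the Lie--Rinehart axioms are then read off from Propositions~\ref{inc4} and~\ref{inc2} exactly as you say.

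The one place where your roadmap drifts from what actually happens is the split into blocks (iii) and (iv). The contractions $i_{[(dg)^{\#},(dh)^{\#}]}i_{(df)^{\#}}\varphi$ do \emph{not} pair off with the $\sum_{\mathrm{cyc}}\{f,\{g,h\}\}\theta$ terms to produce a $\pi^{\#}(\varphi)$-type scalar; they are $1$-forms and must instead be kept together with the $i_{(df)^{\#}}d\eta_{h,g}$, $d\eta_{f,h,g}$ and $d\pi^{\#}(\varphi)(df,dg,dh)$ terms. Likewise, one cannot ``rewrite $d\eta_{h,g}$ as a $\theta$-weighted contraction of $\varphi$'' directly: commuting $d$ past the two interior products generates Lie-derivative terms, not just a $\theta\wedge\varphi$ contribution. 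What the paper does---and what makes the cancellation transparent---is to contract the whole residual $1$-form with an arbitrary vector field $X$ and then recognise that the cyclic sum of \emph{all} these pieces is precisely the Koszul formula for $d\varphi\big((df)^{\#},(dg)^{\#},(dh)^{\#},X\big)$, while the leftover $\theta$-terms (coming from $\{f,\{g,h\}\}\theta$, from $[df,\{g,h\}\theta]_{\varphi,\theta}=\{f,\{g,h\}\}\theta$, and from the $\eta_{f,h,g}\theta$ in Lemma~\ref{lemm1}) collapse via \eqref{Jac} to $(\theta\wedge\varphi)\big((df)^{\#},(dg)^{\#},(dh)^{\#},X\big)$. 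The axiom $d\varphi=\theta\wedge\varphi$ then kills the difference in one stroke. So your ingredients are right, but the bookkeeping in (iii)--(iv) needs to be reorganised along these lines for the argument to close.
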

		\begin{proof}~ 
			\begin{itemize}
				\item Let us first prove that the bracket $[\ ,\ ]_{\varphi,\theta}$ is a Lie bracket on $\Omega^1(M)$.
				The $\mathbb{R}$-bilinearity and skew-symmetry properties of the bracket $[\ ,\ ]_{\varphi,\theta}$ are immediate from its definition.
				We are going to prove the Jacobi identity. Using equality \eqref{inc1} and Proposition \ref{prop1}, we get
				$$
				[udf,[vdg,wdh]_{\varphi,\theta}]_{\varphi,\theta}+\circlearrowleft=uvw\big([df,[dg,dh]_{\varphi,\theta}]_{\varphi,\theta}+\circlearrowleft\big) 
				$$
				for every $f,g,h,u,v,w\in C^\infty(M)$, where $\circlearrowleft$ means that we perform a cyclic sum over $f$, $g$, and $h$. Therefore, we are reduced to proving the Jacobi identity of $[\ ,\ ]_{\varphi,\theta}$ on Pfaffian forms.
				
				From \eqref{Jac}, we get
				\begin{align}\label{S1}
					0 &=dJ_{\{\ ,\ \}}(f,g,h)-d\pi^{\#}(\varphi)(df,dg,dh),
				\end{align}
				where $J_{\{\ ,\ \}}(f,g,h)=\{f,\{g,h\}\}+\{g,\{h,f\}\}+\{h,\{f,g\}\}$.
				According to Lemma \ref{lem2} and relation \eqref{S1}, we have
				\begin{align}\label{sr2}
					\nonumber
					0 &= [df,[dg,dh]_{\varphi,\theta}]_{\varphi,\theta}+[dg,[dh,df]_{\varphi,\theta}]_{\varphi,\theta}+[dh,[df,dg]_{\varphi,\theta}]_{\varphi,\theta}\\ \nonumber
					&\quad-[df,\eta_{h,g}]_{\varphi,\theta}-[dg,\eta_{f,h}]_{\varphi,\theta}-[dh,\eta_{g,f}]_{\varphi,\theta}\\ \nonumber
					&\quad-\eta_{\{g,h\},f}-\eta_{\{h,f\},g}-\eta_{\{f,g\},h}-d\pi^{\#}(\varphi)(df,dg,dh)\\ \nonumber
					&\quad-\{f,\{g,h\}\}\theta-\{g,\{h,f\}\}\theta-\{h,\{f,g\}\}\theta\\
					&\quad-[df,\{g,h\}\theta]_{\varphi,\theta}-[dg,\{h,f\}\theta]_{\varphi,\theta}-[dh,\{f,g\}\theta]_{\varphi,\theta}.
				\end{align}
				From \eqref{expresscrochet}, we have
				\begin{align}\label{sr1}
					\nonumber 
					[df,\{g,h\}\theta]_{\varphi,\theta} &=[df,\{g,h\}\theta]_K+i_{(\{g,h\}\theta)^{\#}}i_{(df)^{\#}}\varphi +\pi(df,\{g,h\}\theta)\theta\\ \nonumber
					&=\mathcal{L}_{(df)^{\#}}(\{g,h\}\theta)-\mathcal{L}_{(\{g,h\}\theta)^{\#}}(df)-d\pi(df,\{g,h\}\theta)\\ \nonumber
					&\quad+i_{\{g,h\}\theta)^{\#}}i_{(df)^{\#}}\varphi +\pi(df,\{g,h\}\theta)\theta\\ \nonumber
					&=i_{(df)^{\#}}d(\{g,h\}\theta)+ di_{(df)^{\#}}(\{g,h\}\theta)+\{g,h\}i_{\theta^{\#}}i_{(df)^{\#}}\varphi \\ \nonumber
					&\quad-\{g,h\}\theta^{\#}(df)\theta\\
					&=\{f,\{g,h\}\}\theta.
				\end{align}
				In the same way, we also have
				\begin{align}\label{sr10}
					[dg,\{h,f\}\theta]_{\varphi,\theta} &= \{g,\{h,f\}\}\theta,
				\end{align}
				and
				\begin{align} \label{sr11}
					[dh,\{f,g\}\theta]_{\varphi,\theta} &= \{h,\{f,g\}\}\theta.
				\end{align}
				
				According to Lemma~\ref{lemm1},  relations \eqref{sr1}, \eqref{sr10}, and  \eqref{sr11},  equality \eqref{sr2} becomes
				\begin{equation}\label{sr6}
					\begin{array}{l}
						0=[df,[dg,dh]_{\varphi,\theta}]_{\varphi,\theta}+[dg,[dh,df]_{\varphi,\theta}]_{\varphi,\theta}+[dh,[df,dg]_{\varphi,\theta}]_{\varphi,\theta}\\
						\hspace{0.7cm}-i_{(df)^{\#}}d\eta_{h,g}-d\eta_{f,h,g}-i_{[(dg)^{\#},(dh)^{\#}]}i_{(df)^{\#}}\varphi\\
						\hspace{0.7cm}- i_{(dg)^{\#}}d\eta_{f,h}-d\eta_{g,f,h}-i_{[(dh)^{\#},(df)^{\#}]}i_{(dg)^{\#}}\varphi\\
						\hspace{0.7cm}-i_{(dh)^{\#}}d\eta_{g,f}-d\eta_{h,g,f}-i_{[(df)^{\#},(dg)^{\#}]}i_{(dh)^{\#}}\varphi\\
						\hspace{0.7cm}-d\pi^{\#}(\varphi)(df,dg,dh)-\eta_{f,h,g}\theta.
					\end{array}
				\end{equation}
				Thus, for every  vector field $X$ on $M$, we have	
				\begin{equation}
					\begin{array}{l}
						0=i_{X}\Big([df,[dg,dh]_{\varphi,\theta}]_{\varphi,\theta}+[dg,[dh,df]_{\varphi,\theta}]_{\varphi,\theta}\\
						\hspace{0.7cm}+[dh,[df,dg]_{\varphi,\theta}]_{\varphi,\theta} \Big)-i_{X}i_{(df)^{\#}}d\eta_{h,g} \\ 
						\hspace{0.7cm}-i_{X}d\eta_{f,h,g}-i_{X}\Big(i_{[(dg)^{\#},(dh)^{\#}]}i_{(df)^{\#}}\varphi\Big)\\
						\hspace{0.7cm}- i_{X}i_{(db)^{\#}}d\eta_{f,h}-i_{X}d\eta_{g,fh}-i_{X}\Big(i_{[(dh)^{\#},(df)^{\#}]}i_{(dg)^{\#}}\varphi\Big)\\
						\hspace{0.7cm}-i_{X}i_{(dh)^{\#}}d\eta_{g,f}-i_{X}d\eta_{h,g,f}-i_{X}\Big(i_{[(df)^{\#},(dg)^{\#}]}i_{(dh)^{\#}}\varphi\Big)\\
						\hspace{0.7cm}-i_{X}\big(d\pi^{\#}(\varphi)(df,dg,dh)\big)-i_{X}\big(\eta_{f,h,g}\cdot\theta\big).
					\end{array}\label{sr7}	
				\end{equation}
				Furthermore,
				\begin{align}
						i_{X}i_{(df)^{\#}}d\eta_{h,g}\nonumber &=(df)^{\#}\Big(\eta_{h,g}(X)\Big)-X\Big(\eta_{h,g}\big((df)^{\#}\big)\Big)\\\nonumber
						&\quad-
						\eta_{h,g}\Big(\big[(df)^{\#},X\big]\Big)\\ 
						&\begin{array}{l}=(df)^{\#}\Big(\varphi\big((dg)^{\#},(dh)^{\#},X\big)\Big)\\
							\hspace{0.4cm}-
							X\Big(\varphi\big((df)^{\#},(dg)^{\#},(dh)^{\#}\big)\Big)\\\hspace{0.4cm}-\varphi\Big(\big[(df)^{\#},X\big],(dg)^{\#},(dh)^{\#}\Big).
						\end{array}\label{ss1}	
					\end{align}
					In the same way, we have
					\begin{align}\label{ss2}
						\begin{array}{l}i_{X}i_{(dg)^{\#}}d\eta_{f,h}= (dg)^{\#}\Big(\varphi\big((dh)^{\#},(df)^{\#},X\big)\Big)\\
							\hspace{2.5cm}-
							X\Big(\varphi\big((dg)^{\#},(dh)^{\#},(df)^{\#}\big)\Big)\\
							\hspace{2.5cm}
							-\varphi\Big(\big[(dg)^{\#},X\big],(dh)^{\#},(df)^{\#}\Big),
						\end{array}
					\end{align}
					and
					\begin{align}\label{ss3}
						\begin{array}{l}i_{X}i_{(dh)^{\#}}d\eta_{g,f}=(dh)^{\#}\Big(\varphi\big((df)^{\#},(dg)^{\#},X\big)\Big)\\
							\hspace{2.5cm}-
							X\Big(\varphi\big((dh)^{\#},(df)^{\#},(dg)^{\#}\big)\Big)\\
							\hspace{2.5cm}-\varphi\Big(\big[(dh)^{\#},X\big],(df)^{\#},(dg)^{\#}\Big).
						\end{array}
					\end{align}
					From \eqref{sr7},   \eqref{ss1}, \eqref{ss2},  and  \eqref{ss3}, we obtain
					\begin{align*}
						0&=i_{X}\Big([df,[dg,dh]_{\varphi,\theta}]_{\varphi,\theta}+[dg,[dh,df]_{\varphi,\theta}]_{\varphi,\theta}+[dh,[df,dg]_{\varphi,\theta}]_{\varphi,\theta} \Big) \\
						&\quad-d\varphi\big((df)^{\#},(dg)^{\#},(dh)^{\#},X\big)+\theta\wedge\varphi\big((df)^{\#},(dg)^{\#},(dh)^{\#},X\big)\\
						&=i_{X}\Big([df,[dg,dh]_{\varphi,\theta}]_{\varphi,\theta}+[dg,[dh,df]_{\varphi,\theta}]_{\varphi,\theta}+[dh,[df,dg]_{\varphi,\theta}]_{\varphi,\theta} \Big).
					\end{align*}
					This prove the Jacobi identity of $[\ ,\ ]_{\varphi,\theta}$.
					\item  Proposition \ref{inc4} shows  that  $[\alpha,f\beta]_{\varphi,\theta}=f[\alpha,\beta]_{\varphi,\theta}+\pi^{\#}(\alpha)(f).\beta $,
					for every  1-forms $\alpha$, $\beta$, and function $f$ on $M$.
					\item  Proposition \ref{inc2}  establishes  that the  $C^\infty(M)$-module $\Omega^1(M)$  endowed with a bracket $[\ ,\ ] _{\varphi,\theta}$
					is a  $\mathbb{R}$-Lie algebra homomorphic to  $\mathfrak{X}^1(M)$ endowed with its natural Lie bracket.
				\end{itemize}
				This ends the proof.
			\end{proof}
			
			

			The key ingredient in defining the $\theta$-almost twisted Poisson cohomology of $(M,\pi,\varphi,\theta)$ is the differential operator $\partial_{\varphi,\theta}$ associated with $\mathfrak{X}(M)$, which takes the following form: 
			$$
			\partial_{\varphi,\theta}^n:\mathfrak{X}^{n}(M) \longrightarrow  \mathfrak{X}^{n+1}(M), \quad v \longmapsto \partial_{\varphi,\theta}^n(v)
			$$
			with
			\begin{equation}\label{expi1}
				\begin{array}{r}
					\partial_{\varphi,\theta}^n(v)(\alpha_1,\dots,\alpha_{n+1}) = \sum_{i=1}^{n+1}(-1)^{i-1}(\alpha_i)^{\#}(v(\alpha_1,\dots,\widehat{\alpha_i},\dots,\alpha_{n+1}))\\  \\+ \sum_{1\leq i<j\leq n+1}(-1)^{i+j}v([\alpha_i,\alpha_j]_{\varphi,\theta},\alpha_1,\dots,\widehat{\alpha_i},\dots,\widehat{\alpha_j},\dots,\alpha_{n+1})
				\end{array}
			\end{equation}
			for every $\alpha_1,\dots,\alpha_{n+1}\in\Omega^1(M)$. The Jacobi identity for the bracket $[\ ,\ ]_{\varphi,\theta}$ ensures that $\partial_{\varphi,\theta} \circ \partial_{\varphi,\theta} = 0$, a crucial property for the cohomology theory to be well-defined. As is standard, terms with a hat symbol are omitted.
			
			%
			%
			
			\begin{definition}  The $\theta$-almost twisted Poisson cohomology of $(M,\pi,\varphi,\theta)$ is defined as the cohomology of the complex $(\mathfrak{X}(M), \partial_{\varphi,\theta})$, denoted by $H_{\theta-atP}^{*}(M,\pi,\varphi,\theta)$ or simply $H_{\theta-atP}^{*}(M)$ when there is no ambiguity. Specifically, for every positive integer $n$,
				\begin{equation}\label{t4}
					H_{\theta-atP}^{n}(M)=\dfrac{ker\Big(\partial_{\varphi,\theta}^{n}:\mathfrak{X}^{n}(M)\to \mathfrak{X}^{n+1}(M)\Big)}{Im\Big(\partial_{\varphi,\theta}^{n-1}:
						\mathfrak{X}^{n-1}(M)\to \mathfrak{X}^{n}(M)\Big)}
				\end{equation}
				where $\mathfrak{X}^{-1}(M)=\{0\}$ by convention. The cohomology class of any element $\mu \in \ker(\partial_{\varphi,\theta}^{n}:\mathfrak{X}^{n}(M)\to \mathfrak{X}^{n+1}(M))$ is denoted by $[\mu]_{\varphi,\theta}$.		
			\end{definition}
			
			Analogously to the approach in \cite[pages 41-43]{R19}, we have $\partial_{\varphi,\theta}\circ \pi^{\#}=-\pi^{\#}\circ d$, which implies the commutativity of the following diagram:
			\begin{center}
				\begin{tikzpicture}
					\node(a) at (0,0) {$0$};
					\node (b) at (2,0) {$C^\infty(M)$};
					\node (c) at (6,0) {$\Omega^1(M)$};
					\node (d) at (10,0) {$\Omega^2(M)$};
					\node (e) at (12,0) {$\dots$};
					
					\node(aa) at (0,-2) {$0$};
					\node(bb) at (2,-2) {$C^\infty(M)$};
					\node(cc) at (6,-2) {$\mathfrak{X}^1(M)$};
					\node(dd) at (10,-2) {$\mathfrak{X}^2(M)$};
					\node(ee) at (12,-2) {$\dots$};
					
					\draw[->] (a)--(b);
					\draw[->] (b)--(c) node[midway, sloped,above]{$-d$};
					\draw[->] (c)--(d) node[midway, sloped,above]{$-d^{2}$};
					\draw[->] (d)--(e);
					\draw[->] (aa)--(bb);
					\draw[->] (bb)--(cc)node[midway, sloped,above]{$\partial_{\varphi,\theta}^{1}$};
					\draw[->] (cc)--(dd)node[midway, sloped,above]{$\partial_{\varphi,\theta}^{2}$};
					\draw[->] (dd)--(ee);
					\draw[->] (b)--(bb) node[midway,left]{$\pi^{\#}$};
					\draw[->] (c)--(cc) node[midway,left]{$\pi^{\#}$};
					\draw[->] (d)--(dd) node[midway, left]{$\pi^{\#}$};
				\end{tikzpicture}
			\end{center}
			And we obtain the following result:
			\begin{proposition} Let $(M,\pi,\varphi,\theta)$ be a $\theta$-almost twisted Poisson manifold. The de Rham cohomology of $M$ is denoted by $H_{dR}^{*}(M,\mathbb{R})$. The cochain complex homomorphism $\pi^{\#}: (\Omega(M),d)\longrightarrow (\mathfrak{X}(M), \partial_{\varphi,\theta})$ induces a homomorphism in cohomology, also denoted by $\pi^{\#}$,
				\begin{equation}\label{ex1}
					\pi^{\#}: H_{dR}^{*}(M,\mathbb{R}) \longrightarrow  H_{\theta-atP}^{*}(M),\
					[\mu] \longmapsto [\pi^{\#}(\mu)]_{\varphi,\theta}.
				\end{equation}
				Moreover, if $\pi$ is nondegenerate, then $\pi^{\#}: H_{dR}^{*}(M,\mathbb{R}) \longrightarrow  H_{\theta-atP}^{*}(M)$ is an isomorphism.
			\end{proposition}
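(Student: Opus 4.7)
The strategy rests on the cochain-level identity $\partial_{\varphi,\theta}\circ \pi^{\#} = -\pi^{\#}\circ d$ recorded in the diagram just above the statement. The plan is first to verify this identity, then to deduce that $\pi^{\#}$ descends to cohomology, and finally to upgrade the descent to an isomorphism under nondegeneracy of $\pi$.

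For the chain-level identity I would proceed degree by degree. In degree zero, unfolding \eqref{expi1} gives $\partial_{\varphi,\theta}^{0}(f)(\alpha) = \pi^{\#}(\alpha)(f) = \pi(\alpha,df) = -\pi^{\#}(df)(\alpha)$, so the relation holds on functions. For $\mu\in\Omega^{n}(M)$ with $n\geq 1$, by $C^{\infty}(M)$-linearity it is enough to evaluate both sides on exact 1-forms $df_{1},\ldots,df_{n+1}$. Expanding $\partial_{\varphi,\theta}^{n}(\pi^{\#}(\mu))$ via \eqref{expi1} produces Lie-derivative terms along the Hamiltonian vector fields $\pi^{\#}(df_{i})$, together with terms involving $[df_{i},df_{j}]_{\varphi,\theta}$; Proposition \ref{inc2} rewrites the latter as $[\pi^{\#}(df_{i}),\pi^{\#}(df_{j})]$. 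Matching the resulting expression against the intrinsic (Koszul) formula for $d\mu$ applied to the vector fields $\pi^{\#}(df_{i})$ then yields the equality, in analogy with the calculation referenced in \cite[pages 41-43]{R19}.

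Once the compatibility is established, the cohomology map is immediate: if $d\mu=0$ then $\partial_{\varphi,\theta}(\pi^{\#}(\mu)) = -\pi^{\#}(d\mu) = 0$, and if $\mu = d\nu$ then $\pi^{\#}(\mu) = -\partial_{\varphi,\theta}(\pi^{\#}(\nu))$, so $[\mu]\mapsto [\pi^{\#}(\mu)]_{\varphi,\theta}$ is a well-defined $\mathbb{R}$-linear map, giving \eqref{ex1}. When $\pi$ is nondegenerate, the musical map $\pi^{\#}\colon\Omega^{1}(M)\to\mathfrak{X}^{1}(M)$ is a $C^{\infty}(M)$-module isomorphism; taking exterior powers over $C^{\infty}(M)$ then produces isomorphisms $\pi^{\#}\colon\Omega^{p}(M)\to\mathfrak{X}^{p}(M)$ in every degree. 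A chain map which is an isomorphism in each degree is a quasi-isomorphism, so $\pi^{\#}$ induces an isomorphism on cohomology.

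The main obstacle is the degreewise verification of the chain identity. In contrast with the untwisted Poisson setting, the extra summands $i_{\pi^{\#}(\beta)}i_{\pi^{\#}(\alpha)}\varphi$ and $\pi(\alpha,\beta)\cdot\theta$ in the bracket \eqref{expresscrochet} contribute terms to $\partial_{\varphi,\theta}(\pi^{\#}(\mu))$ that are absent from $-\pi^{\#}(d\mu)$. I expect the defining conditions $d\varphi = \theta\wedge\varphi$ and $\pi^{\#}(\theta)=0$ of Definition \ref{def1} to be exactly what forces these extra contributions to cancel pairwise, mirroring the role these conditions already played in the Jacobi identity verification in Theorem \ref{prop3}.
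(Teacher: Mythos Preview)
Your plan is correct and matches the paper, which does not give a proof at all: it simply asserts the chain identity $\partial_{\varphi,\theta}\circ\pi^{\#} = -\pi^{\#}\circ d$ ``analogously to the approach in \cite[pages 41--43]{R19}'' and records the proposition as an immediate consequence. One clarification about your final paragraph: the obstacle you anticipate does not arise. In the second sum of \eqref{expi1} the multivector $\pi^{\#}(\mu)$ is evaluated on $[df_i,df_j]_{\varphi,\theta}$ only through $\pi^{\#}\big([df_i,df_j]_{\varphi,\theta}\big)$, and Proposition~\ref{inc2} converts this directly to $[\pi^{\#}(df_i),\pi^{\#}(df_j)]$; the $\varphi$- and $\theta$-terms in \eqref{expresscrochet} are absorbed at that step and leave no residual contributions to cancel. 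After this substitution both sides coincide, up to the sign $(-1)^n$, with the Koszul formula for $d\mu$ evaluated on the vector fields $\pi^{\#}(df_i)$, so the identity follows without any appeal to $d\varphi=\theta\wedge\varphi$ (that condition enters only through Theorem~\ref{prop3}, to ensure $\partial_{\varphi,\theta}^{2}=0$).
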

			
			\subsection{A  class of $\theta$-almost twisted Poisson cohomology  on $\mathbb{R}^5$}\label{TATPs1}
			Let  $(x_1,x_2,x_3,x_4,x_5)$ be a   coordinate system on  $\mathbb{R}^5$. We define:
			\begin{align*}
				\pi &= f \partial_{x_1} \wedge \partial_{x_2} + g \partial_{x_3} \wedge \partial_{x_4},\quad \theta = dx_5,\\
				\varphi &= \partial_{x_1} g^{-1} dx_1 \wedge dx_3 \wedge dx_4 + \partial_{x_2} g^{-1} dx_2 \wedge dx_3 \wedge dx_4
				\\ &\quad+ (\partial_{x_5} g^{-1} - g^{-1}) dx_3 \wedge dx_4 \wedge dx_5 
				\quad + \partial_{x_3} f^{-1} dx_1 \wedge dx_2 \wedge dx_3 \\ &\quad+ \partial_{x_4} f^{-1} dx_1 \wedge dx_2 \wedge dx_4
				+ (\partial_{x_5} f^{-1} - f^{-1}) dx_1 \wedge dx_2 \wedge dx_5,
			\end{align*}
			where $f$ and $g$ are nowhere vanishing functions on $\mathbb{R}^5$. The quadruplet $(\mathbb{R}^5,\pi,\varphi,\theta)$ is a  $\theta$-almost twisted Poisson manifold. After a rather tedious calculation, we obtain
			\begin{equation*}
				H_{\theta\text{-atp}}^p(\mathbb{R}^5) =\begin{cases}
					\begin{array}{lcl}
						C_{x^5}&\text{if} &p=0\\
						E_{\partial_{x_5}}&\text{if}& p=1\\
						0& \text{if}& p \geq 2
					\end{array}
				\end{cases}
			\end{equation*}
			with
			$$C_{x^5}=\{f\in C^\infty(\mathbb{R}^5), \;f(x_1,x_2,x_3,x_4,x_5)=f(x_5)\},\;  E_{\partial_{x_5}}= \{f \partial_{x_5},\; f \in C_{x^5}\}.$$

				\end{document}